\newtheorem{theorem}{Theorem}[section]
\newtheorem{definition}[theorem]{Definition}
\newtheorem{proposition}[theorem]{Proposition}
\newtheorem{example}[theorem]{Example}
\newtheorem{proof}[theorem]{Proof}
\numberwithin{equation}{section}
\begin{document}

\title{\textbf{Some geometric vector fields on $5$-dimensional $2$-step homogeneous nilmanifolds}}
\author{{Ghodratallah Fasihi Ramandi$\sp{a}$\thanks{fasihi@sci.ikiu.ac.ir (Gh. Fasihi Ramandi)}}\\
{\small \textit{Department of Mathematics, Faculty of Science, Imam Khomeini International University, Qazvin, Iran}}}
\date{}
\maketitle
\begin{abstract}
In this paper, we examine some geometric vector fields on $2$-step nilmanifolds of dimension $5$. We show that there is not any invariant concurrent vector field on such spaces. Our results show that in these manifolds each invariant conformal vector field is Killing and every invariant projective field is affine. Also, the space of some other invariant geometric fields such as affine, Killing, and harmonic vector fields on the manifolds are determined. 
\\
\noindent \textit{Keywords:} 
\textit{nilmanifolds, two-step nilpotent Lie groups, geometric vector fields, recurrent vector field.}
\end{abstract}
\section{Introduction}
Two-step nilpotent Lie groups endowed with a left invariant metric, often called two-step homogeneous nilmanifolds have been studied intensively in the last twenty years. These spaces play an important role in Lie groups, geometrical
analysis and mathematical physics. A special class of two-step homogeneous nilmanifolds are Heisenberg groups. The Heisenberg groups play a crucial role in theoretical physics, and they are well understood from the viewpoint of sub-Riemannain geometry. These groups arise in the description of one-dimensional quantum mechanical systems more generally, one can consider Heisenberg groups associated to $n$-dimensional systems, and most generally, to any symplectic vector space.\\

J. Lauret classified all homogeneous nilmanifolds of dimension $3$ and $4$, up to isometry in \cite{larot}. He also, studied the structure of specific $5$-dimensional two-step nilmanifolds with $2$-dimensional center. Then, simply connected two-step nilpotent Lie groups of dimension five equipped with left invariant Riemannian metrics are classified by S. Homolya and O. Kowalski in \cite{homolya}. \\

As we already have noticed that Lie groups provide convenient example of manifolds whose geometry can be studied relatively easily, this makes Lie groups useful as spaces on which to test many geometric conjectures. In \cite{gerard} G. Walschap explored some geometric properties of Lie groups admitting a special geometric vector field. One-harmonic invariant vector fields on three-dimensional Lie groups are studied in \cite{calvin}. In this paper, we examine some geometric vector fields, such as Killing fields, conformal vector fields, projective vector fields, harmonic vector fields and concurrent vector fields on five dimensional homogeneous nilmanifolds. 
\section{Preliminaries}
Suppose $N$ is a simply connected five-dimensional two-step nilpotent Lie group endowed with a left-invariant Riemannian metric $g$ on $N$, which corresponds to an inner product $\langle ,\rangle$ on the Lie algebra $\mathfrak{n}=T_e N$ of $N$. As mentioned in
the introduction, $(N,g)$ is called a simply connected five-dimensional two-step homogeneous nilmanifold. Since, $N$ is simply connected, so the exponential mapping $\mathrm{exp}:\mathfrak{n}\longrightarrow N$ is a diffeomorphism and we need not make distinction automorphisms of $\mathfrak{n}$ and those for $N$. Note that, a Lie algebra $\mathfrak{n}$ is said to be two-step nilpotent if $[\mathfrak{n} ,\mathfrak{n}]\neq 0$  but $[\mathfrak{n},[\mathfrak{n},\mathfrak{n}]]=0$.\\

From now we consider $N$ is a simply connected two-step nilpotent Lie group of dimension five and $\mathfrak{n}$ is its Lie algebra. In order to examine geometric vector fields on these spaces we recall the classification of these spaces which is given in \cite{homolya} and their invariant Christoffel symbols which are given in \cite{salimi}.\\
{\bf Case 1: Lie algebras with one dimensional center:}
In this case there exist an orthonormal basis $\{ e_1, e_2, e_3, e_4, e_5\}$
of $\mathfrak{n}$ such that
\begin{equation}\label{bra 1}
 [e_1, e_2] = \lambda e_5 ,\quad  [e_3, e_4] = \mu e_5,
\end{equation}
where, ${e_5}$ is a basis for the center of $\mathfrak{n}$, and $\lambda \geq \mu > 0$. Also, it is considered that the other
commutators are zero. Moreover, the non-zero Christoffel symbols components are given by
\begin{align}\label{case 1}
\Gamma_{12}^{5} =-\Gamma_{21}^{5}=\dfrac{\lambda}{2} ,\qquad & \Gamma_{15}^{2}=\Gamma_{51}^{2}=-\dfrac{\lambda}{2} ,\nonumber\\
\Gamma_{25}^{1} =\Gamma_{52}^{1}=\dfrac{\lambda}{2} ,\qquad & \Gamma_{34}^{5}=-\Gamma_{43}^{5}=\dfrac{\mu}{2} ,\nonumber \\
\Gamma_{35}^{4} =\Gamma_{53}^{4}=-\dfrac{\mu}{2} ,\qquad & \Gamma_{45}^{3}=\Gamma_{54}^{3}= \dfrac{\mu}{2}.
\end{align}
{\bf Case 2: Lie algebras with two dimensional center:}
In this type, $\mathfrak{n}$ admits an orthonormal basis $\{ e_1, e_2, e_3, e_4, e_5\}$ such that
\begin{equation}\label{bra 2}
[e_1, e_2] = \lambda e_4 ,\quad  [e_1, e_3] = \mu e_5,
\end{equation}
where, $\{e_4 ,e_5\}$ is a basis for the center of $\mathfrak{n}$, the other commutators are zero and $\lambda \geq \mu > 0$. Moreover, the non-zero Christoffel symbols components are given by
\begin{align}\label{case 2}
\Gamma_{12}^{4} =-\Gamma_{21}^{4}=\dfrac{\lambda}{2} ,\qquad & \Gamma_{13}^{5}=-\Gamma_{31}^{5}=\dfrac{\mu}{2} ,\nonumber\\
\Gamma_{14}^{2} =\Gamma_{41}^{1}=-\dfrac{\lambda}{2} ,\qquad & \Gamma_{15}^{3}=\Gamma_{51}^{3}=-\dfrac{\mu}{2} ,\nonumber \\
\Gamma_{24}^{1} =\Gamma_{42}^{1}=\dfrac{\lambda}{2} ,\qquad & \Gamma_{35}^{1}=\Gamma_{53}^{1}= \dfrac{\mu}{2}.
\end{align}
{\bf Case 3: Lie algebras with three dimensional center:}
The Lie algebra structure of this case is as follows.\\
The Lie algebra, $\mathfrak{n}$ admits an orthonormal basis $\{ e_1, e_2, e_3, e_4, e_5\}$ such that for $\lambda >0$
\begin{equation}\label{bra 3}
[e_1, e_2] = \lambda e_3,
\end{equation}
where, $\{e_3,e_4 ,e_5\}$ is a basis for the center of $\mathfrak{n}$, the other commutators are zero. Moreover, the non-zero Christoffel symbols components are given by
\begin{align}\label{case 3}
&\Gamma_{12}^{3} =-\Gamma_{21}^{3}=\dfrac{\lambda}{2} ,\qquad  \Gamma_{13}^{2}=\Gamma_{31}^{2}=-\dfrac{\lambda}{2} ,\nonumber \\
&\Gamma_{23}^{1} =\Gamma_{32}^{1}=\dfrac{\lambda}{2}.
\end{align}
\section{Geometric Vector Fields}
In this section, we will look at some geometric vector fields on a manifold with a linear connection. These concepts are needed at the next section. For details on most of the ideas will touch on here, you can consult \cite{poor}.
\subsection*{Killing Vector Field}
\begin{definition}
A vector field $X$ on a Riemannian manifold $(M,g)$ is said to be a Killing field if and only if $\mathcal{L}_X g=0$, where, $\mathcal{L}_X$ stands for the Lie derivative with respect to $X$. 
\end{definition}
In particular, a Killing field is divergence free and we have the following equation which is known as Killing's equation.
\[\langle \nabla_U X ,V \rangle +\langle U,\nabla_V X \rangle=0,\qquad \forall U,V\in \mathcal{X}(M).\] 
Above definition shows that a Killing vector field on a Riemannian manifold $(M,g)$  preserves the metric. In fact, Killing fields are the infinitesimal generators of isometries; that is, flows generated by Killing fields are isometries of the manifold. More simply, the flow of a Killing filed generates a symmetry, in the sense that moving each point on an object the same distance in the direction of the Killing vector will not distort distances on the object. Also, a typical use of Killing fields is to express a symmetry in space-time manifolds.
\begin{example}
Let $ M=\{(x,y)\in \mathbb {R}^2|y>0\}$ is the upper half plane equipped metric $ g=\dfrac{(dx^{2}+dy^{2})}{y^2}$. The pair $( M,g)$  is typically called the hyperbolic plane and has Killing vector field $X=\dfrac{\partial}{\partial x}$ (using standard coordinates). This should be intuitively clear since the covariant derivative $ \nabla _X g$ transports the metric along an integral curve generated by the vector field. 
\end{example}
\subsection*{Harmonic Vector Field}
One of the most important operators determined by a Riemannian metric is Laplace-Beltrami operator. The kernel of this second-order differential operator is related to De Rham cohomology of the underlying manifold. We define this operator and introduce the notion of harmonic vector fields in what follows.
\\
The $\mathrm{div}$ operator on the space of covariant tensor fields on a Riemannian manifold $(M,g)$ maps the subspace $A^p (M)$ of differential $p$-form to the subspace $A^{p-1}(M)$. The restriction of $\mathrm{div}$ operator to $A(M)$ the algebra of differential forms on $M$ is important and is denoted by a special symbol.
\begin{definition}
Define $\delta (\omega) =-\mathrm{div}(\omega)$, for $\omega \in A(M)$.
\end{definition}
\begin{definition}
Suppose that $d$ is the exterior derivative operator, we define the Laplace-Beltrami (or Laplacian) operator $\triangle :A^p(M)\longrightarrow A^p(M)$ by
\[\triangle (\omega)=(d\circ \delta +\delta \circ d)(\omega).\]
We say a $p$-form is harmonic if it is in the kernel of $\triangle$.
\end{definition}
\begin{definition}
A vector field $X$ in a Riemannian manifold $(M,g)$ is called harmonic if the metric dual $1$-form $X^\flat =g(X,.)$ is harmonic.
\end{definition}

\subsection*{Conformal Vector Field}
Killing vector fields preserve the metric, so it is natural to ask is there some vector fields on a Riemannian manifold which preserve the metric up to a constant factor. This question leads us to the notion of conformal vector fields.
\begin{definition}
Riemannian metrics $g_1$ and $g_2$ on a manifold $M$ is said to be conformally equivalent if there exists $f\in C^\infty (M)$ such that $g_2=fg_1$.
\end{definition}
Let $g_1$ and $g_2$ be conformally equivalent metrics on $M$, then one can easily check that  for each $p\in M$, $g_1$ and $g_2$ induces the same angle measure on $T_p M$. Conversely, an angle measure determines a conformal equivalence class of inner product on $T_pM$.  
\begin{definition}
A diffeomorphism $f:(M,g)\longrightarrow (M,g)$ on a Riemannian manifold $M$ is said to be conformal transformation on $M$ if $f^*(g)$ is conformally equivalent to $g$. 
\end{definition}
\begin{definition}
A vector field $X$ on a Riemannian manifold $(M,g)$ is said to be conformal vector field if whose local $1$-parameter groups consists of local conformal maps. 
\end{definition}
According to above definition, conformal vector fields are the infinitesimal generators of conformal transformations. An isometric map of Riemannian manifolds is a conformal map. Also, Killing fields of a Riemannian manifold are conformal vector fields. In fact, conformal vector fields are generalization of Killing fields.\\
It is proven that the group of all conformal transformations of a connected Riemannian manifold is a Lie group and its Lie algebra is isomorphic to Lie algebra of complete conformal vector fields on $M$.\\
 We will need the following proposition in the next section.
\begin{proposition}\cite{poor}
The following statements are equivalent for a vector field $X$ on a Riemannian manifold $(M^n,g)$:\\
(i) $X$ is a conformal field,\\
(ii) $\mathcal{L}_X g=2hg$ for some $h\in C^{\infty}(M)$,\\
(iii) $\mathcal{L}_X g=\dfrac{2\mathrm{div}(X)}{n}g$.
\end{proposition}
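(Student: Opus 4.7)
The plan is to prove the equivalence via the implication cycle $(i) \Rightarrow (ii) \Rightarrow (iii) \Rightarrow (i)$, passing through the flow of $X$ to bridge between the infinitesimal conditions and the geometric content of (i).

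For $(i) \Rightarrow (ii)$, I would let $\{\phi_t\}$ be the local one-parameter group generated by $X$. Each $\phi_t$ is by hypothesis a local conformal map, so $\phi_t^{*} g = \sigma_t \, g$ for some smooth positive function $\sigma_t$ with $\sigma_0 \equiv 1$. Differentiating at $t=0$ and using $\mathcal{L}_X g = \frac{d}{dt}\big|_{t=0}\phi_t^{*} g$ gives $\mathcal{L}_X g = \dot\sigma_0\, g$; writing $2h := \dot\sigma_0$ yields (ii). For $(ii) \Rightarrow (iii)$, I would take the pointwise $g$-trace of both sides. The right-hand side contributes $2hn$, while the identity $(\mathcal{L}_X g)(Y, Z) = \langle \nabla_Y X, Z\rangle + \langle Y, \nabla_Z X\rangle$, which is immediate from $\mathcal{L}_X g(Y,Z) = Xg(Y,Z) - g([X,Y],Z) - g(Y,[X,Z])$ together with metric compatibility and torsion-freeness, summed over an orthonormal frame, yields $\mathrm{tr}_g(\mathcal{L}_X g) = 2\, \mathrm{div}(X)$. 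Equating the two expressions gives $h = \mathrm{div}(X)/n$, which is (iii).

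The remaining implication $(iii) \Rightarrow (i)$ is the step requiring most care. I would differentiate $\phi_t^{*} g$ along the flow and invoke (iii) to obtain
\[\frac{d}{dt}\phi_t^{*} g \;=\; \phi_t^{*}(\mathcal{L}_X g) \;=\; 2\bigl(\tfrac{\mathrm{div}(X)}{n}\circ \phi_t\bigr)\, \phi_t^{*} g.\]
This is a first-order linear ODE for the symmetric $(0,2)$-tensor $t \mapsto \phi_t^{*} g$ whose coefficient is a scalar function. Setting $\sigma_t := \exp\bigl(\int_0^t \tfrac{2}{n}\,\mathrm{div}(X)\circ\phi_s\,ds\bigr)$, the tensor $\sigma_t\, g$ satisfies the same ODE with the same initial value $g$ at $t=0$, so by uniqueness of solutions $\phi_t^{*} g = \sigma_t\, g$, whence each $\phi_t$ is locally conformal and $X$ is a conformal vector field. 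The main subtlety is precisely here: one is applying ODE uniqueness to a tensor-valued equation, and it is the scalar nature of the coefficient that forces the solution to remain a multiple of $g$ rather than evolving into a genuine symmetric tensor field transverse to $g$.
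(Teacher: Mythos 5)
Your proposal is correct: the implication cycle $(i)\Rightarrow(ii)\Rightarrow(iii)\Rightarrow(i)$ is complete, the trace computation giving $h=\mathrm{div}(X)/n$ is right, and the ODE-uniqueness argument for $(iii)\Rightarrow(i)$ --- where the scalar coefficient forces $\phi_t^{*}g$ to stay on the ray through $g$ --- is exactly the point that needs care and you handle it properly. There is nothing in the paper to compare against: this proposition is quoted from the reference \cite{poor} and no proof is given in the text, so your argument stands on its own as the standard one.
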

\subsection*{Affine and Projective Vector Field}
\begin{definition}
A map $f:(N,\nabla^N)\longrightarrow (M,\nabla^M)$ of manifolds with linear connections is called affine if
\[f_* \nabla^N_X Y =\nabla^M_X f_* Y,\qquad X,Y\in \mathcal{X}(M).\]
An Affine transformation of $(M,\nabla)$ is an affine diffeomorphism of $M$.
\end{definition}
\begin{definition}
An affine vector field on a Riemannian manifold $(M,g)$ is an element $X\in\mathcal{X}(M)$ such that the local $1$-parameter group of $X$ consists of local affine maps of $(M,\nabla)$, where, $\nabla$ is the Levi-Civita connection of $M$.
\end{definition}
Affine vector fields preserve the geodesic structure of semi-Riemannian manifolds whilst also preserving the affine parameter. There exists another smooth vector field on a semi-Riemannain manifolds whose flow preserves the geodesic structure without necessarily preserving the affine parameter of any geodesic. In fact, the flow of a projective field maps geodesics smoothly into geodesics without preserving the affine parameter. 
\begin{definition}
A map $f:(N,\nabla^N)\longrightarrow (M,\nabla^M)$ of manifolds with torsion-free connections is called projective if for each geodesic $\gamma$ of $\nabla^N$, $f\circ \gamma$ is a reparametrization of a geodesic of $\nabla^M$. A projective transformation of $(M,\nabla)$is a diffeomorphism $f:(M,\nabla)\longrightarrow (M,\nabla)$ which is projective.
\end{definition}
Similar to previous definitions, we can define projective vector field as follows.
\begin{definition}
A vector field $X$ on a Riemannian manifold $(M,g)$ with associated Levi-Civita connection $\nabla$ is projective if its local $1$-parameter group consists of local projective transformations.
\end{definition}
We shall use the following proposition in the next section to determine all projective vector fields on simply connected two-step homogeneous nilmanifolds of dimension $5$.
\begin{proposition}\cite{poor}
A vector field $X$ on a Riemannian manifold $(M,g)$ is projective if and only if there exists $\alpha \in A^1(M)$ which will be called the associated 1-form, such that
\[ (\mathcal{L}_X \nabla)(U,V)=\alpha (U)V+\alpha (V)U,\]
where, $\nabla$ is the Levi-Civita connection of the meter $g$. Furthermore, $X$ is affine if and only if $\alpha=0$.
\end{proposition}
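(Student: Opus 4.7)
The plan is to reduce the claim to the standard characterization of projective equivalence between torsion-free connections, and then apply it to the one-parameter family of pulled-back connections $\phi_t^*\nabla$ generated by the local flow $\phi_t$ of $X$.

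First, one would invoke the classical algebraic lemma: two torsion-free linear connections $\nabla$ and $\nabla'$ on $M$ have the same unparametrized geodesics if and only if there exists $\omega\in A^1(M)$ with
\[
\nabla'_U V-\nabla_U V=\omega(U)V+\omega(V)U,\qquad U,V\in\mathcal{X}(M).
\]
The direction ($\Leftarrow$) is a routine check: if $\gamma$ is a $\nabla$-geodesic, then $\nabla'_{\dot\gamma}\dot\gamma=2\omega(\dot\gamma)\dot\gamma$ is a multiple of $\dot\gamma$, so $\gamma$ is a reparametrization of a $\nabla'$-geodesic. For ($\Rightarrow$), at each $p\in M$ and each $v\in T_pM$ the common pre-geodesic through $p$ with initial velocity $v$ forces the symmetric difference tensor $D(U,V):=\nabla'_U V-\nabla_U V$ to satisfy $D(v,v)\in\mathrm{span}(v)$, and a polarization argument at each tangent space then yields the claimed algebraic form of $D$, with $\omega$ visibly smooth.

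Next, $X$ being projective means that every $\phi_t$ is a local projective transformation, equivalently that $\nabla$ and $\phi_t^*\nabla$ share the same unparametrized geodesics for all small $t$. By the lemma this yields a smooth family of 1-forms $\omega_t$ with $\omega_0=0$ such that $(\phi_t^*\nabla)(U,V)-\nabla(U,V)=\omega_t(U)V+\omega_t(V)U$. Differentiating at $t=0$ and setting $\alpha:=\tfrac{d}{dt}\big|_{t=0}\omega_t$ produces the identity
\[
(\mathcal{L}_X\nabla)(U,V)=\alpha(U)V+\alpha(V)U.
\]
For the converse one runs the argument backwards: assuming such an $\alpha$ exists, the condition persists to first order in $t$ for each $\phi_t^*\nabla-\nabla$, and a Picard/uniqueness argument for the geodesic ODE upgrades this infinitesimal statement to the assertion that each $\phi_t$ maps $\nabla$-geodesics to reparametrizations of $\nabla$-geodesics.

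The "furthermore" is pure linear algebra: if $\alpha(U)V+\alpha(V)U\equiv 0$, setting $U=V$ gives $2\alpha(U)U=0$ for every $U$, whence $\alpha\equiv 0$; conversely $\alpha=0$ reduces the projective equation to $\mathcal{L}_X\nabla=0$, which is exactly the defining condition for $X$ to be affine. The main obstacle lies in the converse of the projective-equivalence lemma, namely promoting the pointwise constraint $D(v,v)\in\mathrm{span}(v)$ to the global algebraic identity $D(U,V)=\omega(U)V+\omega(V)U$ with a smooth 1-form $\omega$, and in the ODE bookkeeping needed to integrate the infinitesimal $\alpha$-condition back to a statement about the full flow; the remainder of the proof is a clean application of the Lie derivative.
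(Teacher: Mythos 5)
The paper does not prove this proposition at all: it is quoted verbatim from the cited reference (Poor, \emph{Differential Geometric Structures}) and used as a black box in the later theorems, so there is no in-paper argument to compare yours against. Judged on its own, your sketch follows the standard textbook route and is essentially sound: the Weyl projective-equivalence lemma for torsion-free connections, applied to the family $\phi_t^*\nabla$, followed by differentiation at $t=0$. Two points deserve more care than you give them. First, to differentiate $\omega_t$ at $t=0$ you need $t\mapsto\omega_t$ to be differentiable, which is not automatic from the lemma as stated; the clean fix is to recover $\omega_t$ explicitly by tracing the difference tensor, $(n+1)\,\omega_t(U)=\operatorname{tr}\bigl(V\mapsto (\phi_t^*\nabla-\nabla)(U,V)\bigr)$, which is visibly smooth in $t$. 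Second, for the converse your appeal to ``Picard/uniqueness for the geodesic ODE'' is the wrong mechanism; the standard argument is the identity $\frac{d}{dt}\phi_t^*\nabla=\phi_t^*(\mathcal{L}_X\nabla)$, which gives $\phi_t^*\nabla-\nabla=\int_0^t\phi_s^*(\mathcal{L}_X\nabla)\,ds$, and since pullback preserves tensors of the form $\alpha(U)V+\alpha(V)U$ (replacing $\alpha$ by $\phi_s^*\alpha$), the difference has the required form with $\omega_t=\int_0^t\phi_s^*\alpha\,ds$; the lemma then shows each $\phi_t$ is projective. Your treatment of the ``furthermore'' clause is correct, granting the standard fact that $X$ is affine if and only if $\mathcal{L}_X\nabla=0$.
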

\subsection*{Concurrent Vector Field}
It was proved in \cite{22} that if the holonomy group of a Riemannian $n$-manifold $(M,g)$ leaves a point invariant, then there exists a vector field $X$ on $M$ which satisfies
\[\nabla_Y X=Y\]
for any vector field $Y$ on $M$, where $\nabla$ denotes the Levi-Civita connection of $M$.
\begin{definition}
A vector field $X$ on a Riemannian Manifold $(M,g)$ is called concurrent vector field if for each vector field $Y$ on $M$ satisfies the following equation.
\[\nabla_Y X=Y,\] 
where $\nabla$ denotes the Levi-Civita connection of $M$.
\end{definition}
Geometry of such vector fields have been studied by many mathematicians (see \cite{7}, \cite{15} and \cite{20}). In \cite{soliton} a complete classification of Ricci solitons with concurrent potential field is done. Concurrent vector fields have also been studied in Finsler geometry since the beginning of 1950s (see \cite{16} and \cite{21}). In the next section we show there is no concurrent vector field on simply connected two-step homogeneous nilmanifolds of dimension $5$.
\section{Main Results}
In this section we present our main results on two-step homogeneous nilmanifolds of dimension 5.
\begin{theorem}
There is not any left-invariant concurrent vector field on two-step homogeneous nilmanifolds of dimension 5.
\end{theorem}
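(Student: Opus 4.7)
The plan is to argue by a short direct computation in each of the three cases of the Homolya--Kowalski classification. A left-invariant vector field $X$ on $N$ can be written with respect to the orthonormal basis $\{e_1,\dots,e_5\}$ as $X=\sum_{i=1}^{5} x_i e_i$ with \emph{constant} real coefficients $x_i$, because left-invariance forces the components in a left-invariant frame to be constants. Then for any basis vector $e_j$ we have
\[
\nabla_{e_j}X=\sum_{i=1}^{5} x_i \nabla_{e_j}e_i=\sum_{i,k} x_i\,\Gamma_{ji}^{k}\,e_k,
\]
so the concurrency condition $\nabla_{e_j}X=e_j$ becomes the linear system
\[
\sum_{i=1}^{5} x_i\,\Gamma_{ji}^{k}=\delta_{j}^{k},\qquad 1\le j,k\le 5.
\]

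The strategy is to exhibit an index $j$ and $k$ where this system is already incompatible, regardless of the $x_i$'s. The most efficient choice is $j=k=1$, because in each of the three cases one sees by direct inspection of the lists \eqref{case 1}, \eqref{case 2}, \eqref{case 3} that every Christoffel symbol of the form $\Gamma_{1i}^{1}$ vanishes: in Case~1 the only nonzero symbols involving $j=1$ are $\Gamma_{12}^{5}$ and $\Gamma_{15}^{2}$; in Case~2 they are $\Gamma_{12}^{4}$, $\Gamma_{13}^{5}$, $\Gamma_{14}^{2}$, $\Gamma_{15}^{3}$; in Case~3 they are $\Gamma_{12}^{3}$ and $\Gamma_{13}^{2}$. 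In each situation the upper index is never $1$, so
\[
\langle \nabla_{e_1}X,e_1\rangle=\sum_{i=1}^{5} x_i\,\Gamma_{1i}^{1}=0,
\]
while the concurrency condition would demand $\langle \nabla_{e_1}X,e_1\rangle=\langle e_1,e_1\rangle=1$. This contradiction rules out the existence of any left-invariant concurrent vector field.

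I do not anticipate a serious obstacle: the only thing that must be checked carefully is the observation that $\Gamma_{1i}^{1}=0$ in every one of the three tables, which is a finite inspection. If the author prefers a more uniform presentation, one could instead remark that in all three cases the center of $\mathfrak{n}$ is totally geodesic and the orthogonal decomposition $\mathfrak{n}=\mathfrak{z}\oplus\mathfrak{z}^{\perp}$ implies that the covariant derivatives $\nabla_{e_j}e_i$ always land in the ``off-diagonal'' subspaces dictated by the brackets, so the endomorphism $Y\mapsto \nabla_Y X$ on $\mathfrak{n}$ has zero diagonal entry in the $e_1$ slot and hence cannot equal the identity. Either way, the argument amounts to a single linear algebra observation once the Christoffel data are in place.
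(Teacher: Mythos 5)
Your argument is correct and follows essentially the same route as the paper: a direct computation of $\nabla_{e_j}X$ from the Christoffel tables \eqref{case 1}--\eqref{case 3} showing the concurrency equations are inconsistent, the only difference being that you isolate the single uniform obstruction $\langle\nabla_{e_1}X,e_1\rangle=\sum_i x_i\Gamma_{1i}^{1}=0\neq 1$ in all three cases, whereas the paper picks case-dependent directions and invokes linear independence of the $e_i$. Your version is slightly more economical, but it is the same computation at heart, and your finite inspection that $\Gamma_{1i}^{1}=0$ checks out against all three tables.
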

\begin{proof}
Suppose that $X\in \mathfrak{n}$ is a left-invariant vector field on $N$ which is concurrent with respect to Levi-Civita connection. We prove the theorem in separately the cases where the dimension of center is $1,2$ or $3$.\\ 
{\bf Case 1:} Let $X=\sum_{i=1}^{n}a_i e_i$ is a concurrent vector field, then
\[\nabla_{e_5} X=e_5,\qquad \nabla_{e_1} X=e_1.\]
According to (\ref{case 1}), we can write
\begin{align*}
e_5 &=\dfrac{\lambda}{2}(a_2 e_1 -a_1e_2) +\dfrac{\mu}{2}(a_4 e_3-a_3 e_4),\\
e_1&=\dfrac{\lambda}{2}(a_2 e_5 -a_5e_2).
\end{align*}
Because $\{e_i \}_{i=1}^{5}$ is independent, therefore $a_i =0$ for $1\leq i\leq 5$ which gives us the contradiction $X=0$.\\
{\bf Case 2:}
In this type consider the vector field $X=\sum_{i=1}^{n}a_i e_i$ to be concurrent with respect to the Levi-Cicita connection, then
\[\nabla_{e_1} X=e_1,\qquad \nabla_{e_2} X=e_2,\]
but, according to (\ref{case 2}), we have
\begin{align*}
e_1&=\dfrac{\lambda}{2}(a_2 e_4 -a_4e_2) +\dfrac{\mu}{2}(a_3 e_5-a_5 e_3),\\
e_2 &=\dfrac{\lambda}{2}(a_4 e_1 -a_1e_4).
\end{align*}
Since $\{e_i \}_{i=1}^{5}$ is independent, therefore $a_i =0$ for $1\leq i\leq 5$ which gives us the contradiction $X=0$.\\
{\bf Case 3:} Let $X=\sum_{i=1}^{n}a_i e_i$ be an arbitrary vector field on $N$. By (\ref{case 3}), we have $$\nabla_{e_4} X=0.$$ Hence, in this type, there is not any concurrent vector field on $N$.
\end{proof}

Notice that if a left-invariant vector field $X=\sum_{k=1}^{5}a_k e_k$ on $(N,\nabla)$ is projective vector field then there exists a functional $f\in \mathfrak{n}^*$, such that 
\[(\mathcal{L}_X \nabla )(e_i ,e_j)=f(e_i)e_j+f(e_j)e_i.\]
Since, $\mathfrak{n}$ is two-step nilpotent Lie algebra, easy computation shows that 
\[(\mathcal{L}_X \nabla )(e_i ,e_j) =\sum_{k,l=1}^{5}a_k \Gamma_{ij}^l [e_k,e_l].\]
\begin{theorem}
Denote by $\mathfrak{h}$ the center of $\mathfrak{n}$. Each left-invariant projective vector field $X\in \mathfrak{n}$ is affine and $X$ is affine if and only if $X\in \mathfrak{h}$.
\end{theorem}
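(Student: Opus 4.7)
The plan rests on a single structural observation that is made explicit just before the theorem statement: for a left-invariant vector field $X=\sum_k a_k e_k$ on a $2$-step nilpotent Lie algebra, the tensor $(\mathcal{L}_X\nabla)(e_i,e_j)=\sum_{k,l}a_k\Gamma_{ij}^{l}[e_k,e_l]$ always takes values in $[\mathfrak{n},\mathfrak{n}]\subseteq\mathfrak{h}$. This is the lever that converts the projective equation into a linear-algebraic constraint. I would state and emphasize this at the outset, since it is the only fact from the $2$-step structure that the argument really uses.

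Next I would prove that every left-invariant projective $X$ is affine, i.e.\ that the associated functional $f\in\mathfrak{n}^{*}$ must vanish. By the observation above, the vector $f(e_i)e_j+f(e_j)e_i$ must lie in $\mathfrak{h}$ for all $i,j$. Taking $i=j$ with $e_i\notin\mathfrak{h}$ gives $2f(e_i)e_i\in\mathfrak{h}$, forcing $f(e_i)=0$. Then, for any $e_j\in\mathfrak{h}$, I would pick an $e_i\notin\mathfrak{h}$ and examine $f(e_i)e_j+f(e_j)e_i=f(e_j)e_i$, which must also lie in $\mathfrak{h}$; since $e_i\notin\mathfrak{h}$, this gives $f(e_j)=0$. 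Hence $f\equiv 0$ regardless of which of the three cases we are in, and by the proposition quoted earlier $X$ is affine.

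To characterize affine fields, I would then work case by case, using the Christoffel tables (\ref{case 1}), (\ref{case 2}), (\ref{case 3}) together with the brackets (\ref{bra 1}), (\ref{bra 2}), (\ref{bra 3}). In each case the condition $(\mathcal{L}_X\nabla)(e_i,e_j)=0$ needs only to be checked for those index pairs where $\Gamma_{ij}^l\neq 0$. In Case~1, evaluating at $(i,j)\in\{(1,5),(2,5),(3,5),(4,5)\}$ produces expressions of the form $-\tfrac{\lambda^2}{2}a_r e_5$ or $-\tfrac{\mu^2}{2}a_r e_5$ that force $a_1=a_2=a_3=a_4=0$; in Case~2 the pairs $(1,4),(2,4),(1,5),(3,5)$ force $a_1=a_2=a_3=0$; in Case~3 the pairs $(1,3),(2,3)$ force $a_1=a_2=0$. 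In each case the surviving coordinates are exactly those spanning $\mathfrak{h}$. The reverse implication is immediate: if $X\in\mathfrak{h}$, then $[e_k,e_l]=0$ whenever $a_k\neq 0$, so $(\mathcal{L}_X\nabla)\equiv 0$ and $X$ is affine.

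The main obstacle is not conceptual but combinatorial: one must verify that no additional index pair $(i,j)$ produces an extra constraint or, worse, contradicts the conclusion. This follows from the sparse structure of the Christoffel symbols, but it is worth writing out the check carefully. A minor subtlety to flag is that in the proof of the first assertion one needs at least one basis vector outside $\mathfrak{h}$ to kill $f$ on the center; this is automatic in all three cases because $\mathfrak{h}\neq\mathfrak{n}$ (the algebra is non-abelian), so the argument is uniform.
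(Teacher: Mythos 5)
Your plan coincides with the paper's proof on the second assertion: the characterization of affine fields is obtained from exactly the same index pairs, namely $(1,5),(2,5),(3,5),(4,5)$ in Case 1, $(1,4),(1,5),(2,4),(3,5)$ in Case 2 and $(1,3),(2,3)$ in Case 3, and the converse is the same one-line remark that $[e_k,e_l]=0$ whenever $e_k\in\mathfrak{h}$. Where you genuinely depart from the paper is in killing the $1$-form $f$. The paper reads off the values $f(e_i)$ from the nonvanishing components of $\mathcal{L}_X\nabla$ separately in each of the three cases and then invokes identities such as $0=(\mathcal{L}_X\nabla)(e_1,e_2)=f(e_1)e_2+f(e_2)e_1$ to force everything to zero; you replace all of this by a uniform two-step argument resting on the single containment of the image of $\mathcal{L}_X\nabla$ in $[\mathfrak{n},\mathfrak{n}]\subseteq\mathfrak{h}$, using the diagonal $i=j$ for $e_i\notin\mathfrak{h}$ and then one off-diagonal pair for each central $e_j$. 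That is cleaner and case-independent. (Note that with the displayed formula $\Gamma_{ii}^{l}=0$ for every $i$ in all three cases, so $(\mathcal{L}_X\nabla)(e_i,e_i)=0$ and your diagonal step already yields $f(e_i)=0$ for \emph{all} $i$ at once.)

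The point you should not gloss over is that your ``single structural observation'' is a property of the displayed formula $(\mathcal{L}_X\nabla)(e_i,e_j)=\sum_{k,l}a_k\Gamma_{ij}^{l}[e_k,e_l]$, not of the Lie derivative of the connection itself. The standard expression is $(\mathcal{L}_X\nabla)(e_i,e_j)=[X,\nabla_{e_i}e_j]-\nabla_{[X,e_i]}e_j-\nabla_{e_i}[X,e_j]$, of which the displayed formula keeps only the first term; the omitted terms do not vanish here, because $\nabla_{Z}e_j\neq0$ for central $Z$ (e.g.\ $\Gamma_{51}^{2}\neq0$ in Case 1), and they are not central. Concretely, in Case 1 one computes $(\mathcal{L}_X\nabla)(e_1,e_2)=\tfrac{\lambda^{2}}{2}\bigl(a_2e_1+a_1e_2\bigr)$ and $(\mathcal{L}_X\nabla)(e_1,e_1)=-a_2\lambda^{2}e_2$, neither of which lies in $\mathfrak{h}$. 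So the lever your first paragraph rests on fails if $\mathcal{L}_X\nabla$ is computed from its definition. This defect is inherited from the paper, whose own proof uses the same formula and the same identity $(\mathcal{L}_X\nabla)(e_1,e_2)=0$, and the theorem itself survives: matching the corrected components against $f(e_i)e_j+f(e_j)e_i$ still forces $f\equiv0$ and the vanishing of the non-central coefficients (indeed the pairs $(i,5)$ you use for the affine characterization are unaffected, since there the correction terms do vanish). But since you explicitly advertise the containment in $\mathfrak{h}$ as the only fact your argument uses, you must either justify that containment from the definition of $\mathcal{L}_X\nabla$ (which you cannot) or rework the first part of the argument with the corrected components.
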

\begin{proof}
Suppose that $X\in \mathfrak{n}$ is a left-invariant projective vector field on $N$. We prove the theorem in separately the cases where the dimension of center is $1,2$ or $3$.\\ 
{\bf Case 1:} Let $X=\sum_{i=1}^{5}a_i e_i$ is a projective vector field, then according to (\ref{bra 1}) we can write,
\[(\mathcal{L}_X \nabla )(e_i ,e_j)=(a_1 \lambda \Gamma_{ij}^{2}-a_2 \lambda \Gamma_{ij}^{1}+a_3 \mu \Gamma_{ij}^{2}-a_4 \mu \Gamma_{ij}^{3})e_5.\]
The formulas (\ref{case 1}) show that
\begin{align*}
(\mathcal{L}_X \nabla )(e_1 ,e_5) =-a_1 \dfrac{\lambda^2}{2}e_5,\quad &(\mathcal{L}_X \nabla )(e_2 ,e_5)=-a_2 \dfrac{\lambda^2}{2}e_5 ,\\
(\mathcal{L}_X \nabla )(e_3 ,e_5) =-a_3 \dfrac{\mu^2}{2}e_5,\quad &(\mathcal{L}_X \nabla )(e_4 ,e_5)=-a_4 \dfrac{\mu^2}{2}e_5 .
\end{align*}
So, $f\in \mathfrak{n}^*$ (where, $\mathfrak{n}^*$ shows the dual space on $\mathfrak{n}$) must satisfy
\begin{align*}
f(e_1) =-a_1 \dfrac{\lambda^2}{2},\quad &f(e_2)=-a_2 \dfrac{\lambda^2}{2} ,\\
f(e_3)=-a_3 \dfrac{\mu^2}{2},\quad &f(e_4)=-a_4 \dfrac{\mu^2}{2} ,\quad f(e_5)=0.
\end{align*}
But, we also have
\begin{align*}
0=& (\mathcal{L}_X \nabla )(e_1 ,e_2) =f(e_1)e_2 +f(e_2)e_1, \\
0=& (\mathcal{L}_X \nabla )(e_3 ,e_4) = f(e_3)e_4 +f(e_4)e_3 .
\end{align*}
Because $e_i$s are independent, therefore we have $a_1 =a_2=a_3=a_4=0$ and $f$ is identically zero.\\
 
{\bf Case 2:}
In this case, suppose that $X=\sum_{i=1}^{5}a_i e_i$ is a projective vector field, then according to (\ref{bra 2}) we can write,
\[(\mathcal{L}_X \nabla )(e_i ,e_j)=(a_1 \lambda \Gamma_{ij}^{2}-a_2 \lambda \Gamma_{ij}^{1})e_4+(a_1 \mu \Gamma_{ij}^{3}-a_3 \mu \Gamma_{ij}^{1})e_5.\]
The formulas (\ref{case 2}) show that
\begin{align*}
&(\mathcal{L}_X \nabla )(e_1 ,e_4) =-a_1 \dfrac{\lambda^2}{2}e_4,\quad (\mathcal{L}_X \nabla )(e_1 ,e_5)=-a_1 \dfrac{\mu^2}{2}e_5 ,\\
&(\mathcal{L}_X \nabla )(e_2 ,e_4) =-a_2 \dfrac{\lambda^2}{2}e_4-a_3 \dfrac{\mu \lambda}{2}e_5,\\
&(\mathcal{L}_X \nabla )(e_3 ,e_5)=-a_2 \dfrac{\mu \lambda}{2}e_4 -a_3\dfrac{\mu^2}{2}e_5.
\end{align*}
So, $f\in \mathfrak{n}^*$ must satisfy
\begin{align*}
&f(e_1) =-a_1 \dfrac{\lambda^2}{2},\quad  f(e_4)=0,\\
&f(e_2)=-a_2 \dfrac{\lambda^2}{2},\quad a_3=0,\\
&f(e_3) =-a_3 \dfrac{\mu^2}{2},\quad a_2=0,\\
&f(e_1)=-a_1 \dfrac{\mu^2}{2},\quad  f(e_5)=0.
\end{align*}
But, we also have
\begin{align*}
0= (\mathcal{L}_X \nabla )(e_1 ,e_3) =f(e_1)e_3 +f(e_3)e_1.
\end{align*}
Because $e_i$s are independent, therefore we have $a_1 =a_2=a_3=0$ and $f$ is identically zero.\\
 
{\bf Case 3:} By considering $X=\sum_{i=1}^{5}a_i e_i$ as a projective vector field, then according to (\ref{bra 3}) we have,
\[(\mathcal{L}_X \nabla )(e_i ,e_j)=(a_1 \lambda \Gamma_{ij}^{2}-a_2 \lambda \Gamma_{ij}^{1})e_3.\]
The formulas (\ref{case 3}) show that
\begin{align*}
(\mathcal{L}_X \nabla )(e_1 ,e_3) =-a_1 \dfrac{\lambda^2}{2}e_3,\quad &(\mathcal{L}_X \nabla )(e_1 ,e_4)=0 ,\\
(\mathcal{L}_X \nabla )(e_2 ,e_3) =-a_2 \dfrac{\mu^2}{2}e_3,\quad &(\mathcal{L}_X \nabla )(e_3 ,e_5)=0 .
\end{align*}
So, $f\in \mathfrak{n}^*$ must satisfy
\begin{align*}
&f(e_1) =-a_1 \dfrac{\lambda^2}{2},\quad f(e_3)=0 ,\\
&f(e_1) =0 ,\quad \qquad f(e_4)=0 ,\\
&f(e_3) =0,\quad\qquad f(e_5)=0 ,\\
&f(e_2)=-a_2 \dfrac{\mu^2}{2},\quad f(e_3)=0.
\end{align*}
But, we also have
\begin{align*}
0= (\mathcal{L}_X \nabla )(e_2 ,e_4) =f(e_2)e_4 +f(e_4)e_2,
\end{align*}
Because $e_i$s are independent, therefore we have $a_1 =a_2=0$ and $f$ is identically zero.\\
By above computations the converse is clear.
\end{proof}

As mentioned before, a vector field $X$ on a Riemannain manifold $(M,g)$ is conformal if and only if $\mathcal{L}_X g=\dfrac{2\mathrm{div}(X)}{n}g$. Let $X=\sum_{i=1}^{n}a_i e_i$ be an arbitrary left-invariant vector field on $N$, then $\mathrm{div}(X)$ is computed as follows,
\[\mathrm{div}(X) =\sum_{i=1}^{5}\langle \nabla_{e_i} X ,e_i \rangle .\]
In all cases where the dimension of center is $1,2$ or $3$, the formulas (\ref{case 1}), (\ref{case 2} ) and (\ref{case 3} ) show that the right hand side of the above formula is identically zero. Hence, we have the following theorem.
\begin{theorem}
Every left-invariant conformal vector field on $N$ is Killing field. 
\end{theorem}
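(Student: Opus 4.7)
The plan is to combine the conformal characterization $\mathcal{L}_X g = \tfrac{2\,\mathrm{div}(X)}{n}g$ recorded in the proposition above with a direct check that every left-invariant vector field on $N$ is divergence-free. Once $\mathrm{div}(X)\equiv 0$ is established for arbitrary left-invariant $X=\sum_{j=1}^{5}a_j e_j$, the conformal condition collapses to $\mathcal{L}_X g = 0$, which is precisely Killing's equation, so $X$ is Killing and the conclusion is immediate.

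For the divergence itself, I would use orthonormality of the frame $\{e_i\}$ together with $\nabla_{e_i}e_j=\sum_{k}\Gamma_{ij}^{k}e_k$ to write
\[
\mathrm{div}(X)\;=\;\sum_{i=1}^{5}\langle \nabla_{e_i}X,\,e_i\rangle\;=\;\sum_{i,j=1}^{5} a_j\,\Gamma_{ij}^{\,i}.
\]
The remaining step in each of the three cases is pure bookkeeping: inspect the tables (\ref{case 1}), (\ref{case 2}), and (\ref{case 3}) and observe that not a single tabulated nonzero Christoffel symbol has its upper index equal to its first lower index. Consequently every term of the double sum vanishes identically in the constants $a_1,\dots,a_5$, and $\mathrm{div}(X)=0$ for every left-invariant $X$, in all three cases.

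There is essentially no obstacle here beyond the case-by-case inspection, which the Christoffel tables already assembled in Section~2 make trivial. The structural reason behind the vanishing is that on any nilpotent Lie algebra $\mathrm{ad}_X$ is a nilpotent endomorphism and hence has zero trace, while the Koszul formula applied to left-invariant orthonormal frames gives $\mathrm{div}(X)=-\mathrm{tr}(\mathrm{ad}_X)$ for left-invariant $X$; one could insert this as a brief remark to explain why the tabulated symbols conspire as they do. In either presentation, feeding $\mathrm{div}(X)=0$ back into part~(iii) of the conformal proposition yields $\mathcal{L}_X g=0$ and completes the proof.
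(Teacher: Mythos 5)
Your argument is correct and is essentially the paper's own proof: the paper likewise invokes the characterization $\mathcal{L}_X g = \tfrac{2\,\mathrm{div}(X)}{n}g$ and observes that $\mathrm{div}(X)=\sum_{i}\langle\nabla_{e_i}X,e_i\rangle$ vanishes identically in all three cases by inspection of the Christoffel tables. Your added observations --- that the vanishing is exactly the absence of nonzero symbols $\Gamma_{ij}^{\,i}$, and the structural explanation $\mathrm{div}(X)=-\mathrm{tr}(\mathrm{ad}_X)=0$ by nilpotency --- are both correct and make the bookkeeping more transparent than the paper's one-line assertion.
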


The above theorem indicates that the subspace of Killing fields is equal to the space of conformal vector fields in two-step homogeneous nilmanifolds of dimension 5. In the next theorem we investigate on Killing fields in theses spaces. Straightforward computations show that a left-invariant vector field $X\in \mathfrak{n}$ is Killing if and only if
\[ \langle \nabla_U X ,V\rangle+\langle U, \nabla_V X\rangle=0
\] 
By means of an orthonormal basis $\{e_i\}_{i=1}^{5}$ for $\mathfrak{n}$, let $X=\sum_{k=1}^5 a_k e_k$, local computations of the above formula is as follows.
\begin{equation}\label{killing}
\langle \nabla_{e_i} X ,e_j\rangle+\langle e_i, \nabla_{e_j} X\rangle =\sum_{k=1}^{n} a_k (\Gamma_{ik}^j +\Gamma_{jk}^{i}).
\end{equation}
\begin{theorem}
The Lie algebra of all left-invariant Killing vector fields of $N$ is four-dimensional.
\end{theorem}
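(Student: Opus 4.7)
The plan is to unwind equation (\ref{killing}) into a concrete finite linear system. Writing $X = \sum_{k=1}^{5} a_k e_k$ and taking $U = e_i$, $V = e_j$ in turn, each pair with $1 \le i \le j \le 5$ yields one homogeneous linear equation
\[ \sum_{k=1}^{5} a_k \bigl(\Gamma_{ik}^{j} + \Gamma_{jk}^{i}\bigr) = 0 \]
in the unknowns $a_1,\dots,a_5$. The space of left-invariant Killing fields is the kernel of the resulting $15 \times 5$ system, so the whole problem reduces to a rank computation carried out separately in each of the three cases from Section~2, using the Christoffel tables (\ref{case 1}), (\ref{case 2}), (\ref{case 3}).

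A useful preliminary observation is that every element of the center $\mathfrak{h}\subset\mathfrak{n}$ is automatically a solution: if $e_\ell\in\mathfrak{h}$ then $\mathrm{ad}(e_\ell)=0$, and the Koszul formula turns this into $\Gamma_{i\ell}^{j}+\Gamma_{j\ell}^{i}=0$ for every $i,j$. So the center contributes $\dim\mathfrak{h}$ solutions for free, and the remaining task is to decide which, if any, non-central directions also survive. I would then enumerate the fifteen equations in each case: most collapse to $0=0$ because both $\Gamma_{ik}^{j}$ and $\Gamma_{jk}^{i}$ vanish, and only a handful of pairs — those linking a central direction $e_j$ with a pair of non-central basis vectors whose bracket lands on $e_j$ — produce effective linear constraints. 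Solving those small homogeneous subsystems case by case and combining with the central contribution should yield the claimed dimension.

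The anticipated difficulty is purely bookkeeping rather than conceptual, but there is one technical caveat worth flagging at the outset: the entry $\Gamma_{41}^{1}$ printed in (\ref{case 2}) cannot be correct as displayed, since metric compatibility of the Levi-Civita connection forces $\Gamma_{41}^{1}=0$; on structural grounds it should be read as $\Gamma_{41}^{2}=-\lambda/2$. I would re-derive this from the Koszul formula before substituting into (\ref{killing}), so that the effective equations in Case~2 are set up correctly. The main conceivable obstacle beyond bookkeeping is verifying that, after aggregating the per-case counts, the non-central constraints really do cut the ambient five-dimensional $\mathfrak{n}$ down to the announced four-dimensional kernel.
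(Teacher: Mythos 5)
Your framework is the same one the paper starts from: read (\ref{killing}) as a homogeneous linear system with one equation for each unordered pair $(i,j)$, and compute the kernel case by case. Your preliminary observation that the center lies in the kernel is correct (for $e_\ell\in\mathfrak{h}$ one checks $\Gamma_{i\ell}^{j}+\Gamma_{j\ell}^{i}=0$ directly from (\ref{case 1})--(\ref{case 3})), and so is your reading of the misprint in (\ref{case 2}): the entry should be $\Gamma_{41}^{2}=-\lambda/2$.

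The gap is that you stop before the rank computation, and when that computation is carried out exactly as you propose --- each pair $(i,j)$ imposing its own equation --- it does not give a four-dimensional kernel. In Case 1 the pair $(1,5)$ gives $\sum_k a_k(\Gamma_{1k}^{5}+\Gamma_{5k}^{1})=\lambda a_2=0$, the pair $(2,5)$ gives $-\lambda a_1=0$, and the pairs $(3,5)$ and $(4,5)$ give $\mu a_4=0$ and $-\mu a_3=0$; since $\lambda\geq\mu>0$ this forces $a_1=a_2=a_3=a_4=0$, so the kernel is $\mathrm{span}\{e_5\}=\mathfrak{h}$, which is one-dimensional. The analogous computations in Cases 2 and 3 give kernels equal to the center, of dimensions $2$ and $3$. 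The paper arrives at ``four-dimensional'' by collapsing these separate constraints into the single aggregated equation $\lambda(a_2-a_1)+\mu(a_4-a_3)=0$ (and its analogues in the other cases), which is strictly weaker than requiring each pair's equation to vanish individually. Your per-pair system is the correct reading of the Killing condition --- it matches the general fact that a left-invariant field is Killing iff $\mathrm{ad}(X)$ is skew-adjoint, which on a two-step nilpotent metric Lie algebra forces $X\in\mathfrak{h}$ --- so your method, faithfully executed, yields $\dim\mathfrak{h}$ rather than $4$ and cannot establish the statement as printed. You would need to either locate additional Killing fields outside $\mathfrak{n}$ (which the word ``left-invariant'' excludes) or flag the discrepancy rather than paper over it.
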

\begin{proof}
Suppose that $\{e_i\}_{i=1}^{5}$ is an orthonormal basis of $\mathfrak{n}$ and $X=\sum_{k=1}^5 a_k e_k$ is a left-invariant Killing field on $N$. We prove the theorem in separately the cases where the dimension of center is $1,2$ or $3$.\\ 
{\bf Case 1:} In this type, by (\ref{case 1} ) we have 
\begin{align*}
0&=\sum_{k=1}^{n} a_k (\Gamma_{ik}^j +\Gamma_{jk}^{i})\\
&= \lambda (a_2 -a_1) +\mu (a_4 -a_3).
\end{align*}
According to above equality, we have three degrees of freedom in choosing $\{a_i\}_{i=1}^{4}$ and $a_5$ could be choose arbitrarily.\\
{\bf Case 2:} In this case, the formulas (\ref{case 2} ) show that
\begin{align*}
0&=\sum_{k=1}^{n} a_k (\Gamma_{ik}^j +\Gamma_{jk}^{i})\\
&= \lambda (a_2 -a_1) +\mu (a_3 -a_1).
\end{align*}
Hence, we have two degrees of freedom in choosing $\{a_i\}_{i=1}^{3}$. Also, $a_4$ and $a_5$ could be choose arbitrarily.\\
{\bf Case 3:} In the case, according to (\ref{case 3} ) we have the following equality
\begin{align*}
0&=\sum_{k=1}^{n} a_k (\Gamma_{ik}^j +\Gamma_{jk}^{i})\\
&= \lambda (a_2 -a_1).
\end{align*}
So, $a_1=a_2$ and other coefficients could be choose arbitrarily.
\end{proof}

Now, we want to examine harmonic vector fields on $N$.
\begin{theorem}
Denote by $\mathfrak{h}$ the center of $\mathfrak{n}$. A left-invariant vector field $X\in \mathcal{X}(N)$ is harmonic if and only if \\
a) $X\in \mathfrak{h}^\perp$ ,in the cases where dimension of $\mathfrak{h}$ is 1 or 2.\\
b) $\langle X, e_3 \rangle=0$, in the case where dimension of $\mathfrak{h}$ is 3.
\end{theorem}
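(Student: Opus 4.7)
The plan is to reduce the harmonicity condition $\triangle \omega = 0$ on $\omega := X^\flat = \sum_{k=1}^{5} a_k e^{k}$ to a purely algebraic system in the coefficients $a_k$. The decisive first observation is that the calculation already carried out for the conformal-field theorem shows $\mathrm{div}(X) \equiv 0$ for every left-invariant $X$ on $N$, and hence $\delta \omega = -\mathrm{div}(X) = 0$. This makes $d\delta\omega = 0$ automatic, so $\omega$ is harmonic if and only if $\delta(d\omega) = 0$. The whole theorem thus collapses to a single coclosedness condition on $d\omega$.

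Next, I compute $d\omega$ using $d\omega(e_i,e_j) = e_i\omega(e_j) - e_j\omega(e_i) - \omega([e_i,e_j])$; the first two terms vanish because $\omega(e_k) = a_k$ is constant on $N$, leaving $d\omega(e_i, e_j) = -\omega([e_i, e_j])$. Substituting the bracket relations (\ref{bra 1}), (\ref{bra 2}), (\ref{bra 3}), the 2-form $d\omega$ depends only on the central components of $X$: in Case 1 it is $-a_5\lambda\, e^{1}\wedge e^{2} - a_5\mu\, e^{3}\wedge e^{4}$; in Case 2, $-a_4\lambda\, e^{1}\wedge e^{2} - a_5\mu\, e^{1}\wedge e^{3}$; in Case 3, $-a_3\lambda\, e^{1}\wedge e^{2}$.

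Third, I compute $\delta(d\omega)$ from $(\delta \eta)(e_k) = -\sum_i (\nabla_{e_i}\eta)(e_i, e_k)$. Since $\eta = d\omega$ is left-invariant, the derivative terms $e_i(\eta(e_i,e_k))$ vanish, and a glance at the Christoffel tables (\ref{case 1}), (\ref{case 2}), (\ref{case 3}) shows $\nabla_{e_i} e_i = 0$ in each case, so the formula collapses to
\[
(\delta(d\omega))(e_k) = \sum_{i,j=1}^{5} \Gamma_{ik}^{j}\, d\omega(e_i,e_j).
\]
Plugging in the Christoffel symbols case by case yields, in each coordinate direction, an explicit linear expression in the $a_k$.

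Finally, one reads off the conclusion. In Case 1, only $(\delta(d\omega))(e_5) = a_5(\lambda^2+\mu^2)$ is non-trivial, forcing $a_5 = 0$, i.e.\ $X\in \mathfrak{h}^\perp$. In Case 2, the surviving components are proportional to $a_4\lambda^2$ along $e^{4}$ and to $a_5\mu^2$ along $e^{5}$, giving $a_4 = a_5 = 0$, so again $X\in \mathfrak{h}^\perp$. In Case 3, only the $e^{3}$ component $a_3\lambda^2$ survives, hence $\langle X,e_3\rangle = 0$ is equivalent to harmonicity while $a_4$ and $a_5$ remain unconstrained — matching the split (a)/(b) of the statement. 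The only real obstacle is the bookkeeping in Cases 1 and 2: one must check, for every $k$ not in the central index set, that every triple $(i,j,k)$ with $\Gamma_{ik}^{j} \neq 0$ falls on a pair $(e_i,e_j)$ where $d\omega$ vanishes. No deeper geometric input is needed beyond the two reductions above.
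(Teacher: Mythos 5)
Your proposal is correct and follows essentially the same route as the paper: both reduce harmonicity to $\delta(dX^\flat)=0$ via $\mathrm{div}(X)=0$, and your formula $(\delta(d\omega))(e_k)=\sum_{i,j}\Gamma_{ik}^{j}\,d\omega(e_i,e_j)$ is exactly the paper's expression once one substitutes $d\omega(e_i,e_j)=-\langle X,[e_i,e_j]\rangle$ and notes $\Gamma_{ii}^{k}=0$. The only discrepancies are harmless normalization constants (e.g.\ $a_5(\lambda^2+\mu^2)$ versus the paper's $a_5(\tfrac{\lambda^2}{2}+\tfrac{\mu^2}{2})$, and a sign on the $e_4$-component in Case 2), which do not affect the conclusion in any case.
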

\begin{proof}
Let $X=\sum_{i=1}^{5}a_i e_i$ is an arbitrary left-invariant vector field on $(N,g)$ which is harmonic with respect to the Levi-Civita connection of $N$. As mentioned before $\mathrm{div}(X)=0$, so we have
\[\delta (X^\flat)=-\mathrm{div}(X^\flat)=-\mathrm{div}(X)=0.\]
Hence, a left-invariant vector field $X=\sum_{i=1}^{n}a_i e_i$ is harmonic if and only if $(\delta \circ d)(X^\flat)=0$. An easy computation shows that
\[\delta(dX^\flat)(e_j)=-\sum_{i,k}^{5}\big( \Gamma_{ii}^k \langle X, [e_k ,e_j]\rangle +\Gamma_{ij}^k \langle X, [e_i ,e_k]\rangle\big).\]
{\bf Case 1:} In this case, the only non-zero component of $\delta(dX^\flat)$ is given by
\[\delta(dX^\flat)(e_5) =a_5 (\dfrac{\lambda^2}{2}+\dfrac{\mu^2}{2}). \]
So, $X$ is harmonic if and only if $X\in \mathfrak{h}^\perp$.\\
{\bf Case 2:} In this case, we get
\[\delta(dX^\flat)(e_4) =-a_4 \lambda^2 , \quad \delta(dX^\flat)(e_5) =a_5 \mu^2,\]
and the other coefficients are zero. We must have $a_4 =a_5 =0$.
\\
{\bf Case 3:} In this case, it can be seen that
\[\delta(dX^\flat)(e_3) =a_3 \lambda^2,\]
and the other coefficients are zero. So, $X$ is harmonic if and only if $a_3 =0$.
\end{proof}

\end{document}